\documentclass[11pt]{amsart} 
\usepackage[foot]{amsaddr}
\usepackage{amsmath,amssymb,bm, color} 
 
\usepackage{amsmath,float}
\usepackage{amssymb}
\usepackage{graphicx}
 \usepackage{tikz}
  \numberwithin{dummy}{section}

\newtheorem{algorithm}{Least Squares Weak Galerkin Algorithm}

\newcommand{\bu}{{\bf u}}

\newcommand{\bw}{{\bf w}}

\newcommand{\bv}{{\bf v}}

\def\V{{\mathcal V}}
\def\W{{\mathcal W}}

\def\bbf{{\bf f}}
\def\bg{{\bf g}}

\def\3bar{{|\hspace{-.02in}|\hspace{-.02in}|}}

 \newtheorem{theorem}{Theorem}[section]
 \newtheorem{lemma}[theorem]{Lemma}

 \def\ad#1{\begin{aligned}#1\end{aligned}}  \def\b#1{{\bf #1}} 
\def\a#1{\begin{align*}#1\end{align*}} 
\def\an#1{\begin{align}#1\end{align}}

\def\p#1{\begin{pmatrix}#1\end{pmatrix}} 
  \numberwithin{equation}{section}
 
\def\boxit#1{\vbox{\hrule height1pt \hbox{\vrule width1pt\kern1pt
     #1\kern1pt\vrule width1pt}\hrule height1pt }}

\long\def\comment#1{}

\title[Least-Squares Weak Galerkin]
{Solving the Stokes Equations via a Least Squares Weak Galerkin Method}

  \author {Chunmei Wang$\dagger$}
  \address{Department of Mathematics, University of Florida, Gainesville, FL 32611, USA. }
  \email{chunmei.wang@ufl.edu}
 
\thanks{$\dagger$ \ Corresponding author. }

\author {Shangyou Zhang}
\address{Department of Mathematical Sciences,  University of Delaware, Newark, DE 19716, USA}   \email{szhang@udel.edu}  

\begin{document}
\begin{abstract}
We present a least-squares weak Galerkin (LS-WG) finite element method for solving the Stokes equations on arbitrary polygonal and polyhedral meshes. By utilizing discrete weak derivatives on discontinuous polynomial spaces, the proposed framework naturally accommodates complex domain geometries and general partitions. Crucially, this least-squares formulation bypasses the traditional inf-sup (LBB) compatibility condition, transforming the standard indefinite saddle-point problem into an inherently symmetric and positive definite (SPD) discrete linear system. We establish the well-posedness of the numerical scheme and rigorously derive optimal-order error estimates in a custom discrete energy norm. Specifically, we prove convergence rates of $\mathcal{O}(h^k)$ for the discrete projection error and $\mathcal{O}(h^{k-1})$ for the global approximation error when employing polynomials of degree $k \ge 1$ for the velocity field and $k-1$ for the pressure. Extensive numerical experiments confirm these theoretical convergence rates, demonstrating the method's robustness, geometric flexibility, and overall efficiency.
\end{abstract}

\keywords{Weak  Galerkin, least squares, finite element methods, the Stokes equations,
polyhedral meshes.}

\subjclass{65N15, 65N30, 76D07; Secondary, 35B45, 35J50}
  
\maketitle
 
\section{Introduction}

The Stokes equations are fundamental to fluid mechanics, governing the behavior of incompressible viscous fluids in the creeping flow regime where the Reynolds number is very low. In this regime, viscous forces dominate inertial effects, making the model essential for simulating diverse phenomena across science and engineering. Key applications range from microfluidics and lab-on-a-chip technologies to biological systems—such as the swimming mechanisms of microorganisms—and large-scale geophysical processes, including mantle convection and glacier dynamics. Furthermore, the Stokes problem provides the underlying linear structure for the full, nonlinear Navier-Stokes equations. Consequently, the development of robust and geometrically flexible numerical solvers for the Stokes system remains a critical area of computational research.

In this paper, we introduce a least squares weak Galerkin (WG) finite element method for the Stokes equations. We seek a velocity field $\bu$ and a pressure $p$ that satisfy the following system:
\begin{align}
-\Delta\bu + \nabla p &= \bbf \quad \text{in } \Omega, \label{moment} \\
\nabla \cdot \bu &= 0 \quad \text{in } \Omega, \label{cont} \\
\bu &= \bg \quad \text{on } \partial\Omega, \label{bc}
\end{align}
where $\Omega \subset \mathbb{R}^d$ ($d=2,3$) is a polygonal or polyhedral domain, $\bbf$ is the body force, and $\bg$ represents the Dirichlet boundary data.

The numerical approximation of the Stokes problem presents significant challenges, primarily due to the indefinite saddle-point nature of the system. Standard mixed finite element methods (FEM) require that the velocity and pressure spaces satisfy the restrictive discrete inf-sup (Ladyzhenskaya-Babu\v{s}ka-Brezzi or LBB) condition to ensure stability and well-posedness \cite{ctvw2009}. Constructing inf-sup stable element pairs on complex or non-matching grids is often computationally intensive and geometrically limiting. 

To overcome these constraints, discontinuous Galerkin (DG) methods were developed to relax inter-element continuity, thereby enhancing geometric flexibility and enabling divergence-free velocity approximations \cite{cks2006}. More recently, the weak Galerkin (WG) finite element method has emerged as a powerful alternative.  The WG framework \cite{wg1, wg2, wg3, wg4, wg5, wg6, wg7, wg8, wg9, wg10, wg11, wg12, wg13, wg14, wg15, wg16, wg17, wg18, wg19, wg20, wg21, itera, wz2023, guan, guan2} utilizes discrete weak derivatives and enforces continuity across interfaces through specially designed stabilizers. First applied to the Stokes problem in \cite{wy2015}, the WG method naturally accommodates arbitrary polygonal and polyhedral meshes. Recent refinements have further introduced lifting operators and advanced stabilization to ensure pressure robustness across complex polytopal domains \cite{wwl2022, wz2025, my2016}.

A notable evolution within this field is the Primal-Dual Weak Galerkin (PDWG) method \cite{pdwg1, pdwg2, pdwg3, pdwg4, pdwg5, pdwg6, pdwg7, pdwg8, pdwg9, pdwg10, pdwg11, pdwg12, pdwg13, pdwg14, pdwg15}, which formulates numerical approximations as constrained minimization problems. By enforcing the governing equations as weak constraints through Lagrange multipliers, PDWG offers favorable stability and symmetry for various problems, including non-self-adjoint linear transport equations \cite{wwhyperbolic, pdwg1, pdwg6, pdwg12}.

Building upon these developments, this work proposes a novel least-squares weak Galerkin finite element method for the Stokes problem \eqref{moment}--\eqref{bc}. Unlike traditional WG formulations that may yield non-symmetric systems, or primal-dual WG (PDWG) methods that increase the total number of unknowns via dual variables, the proposed LS-WG method minimizes a least-squares functional in a weak sense. This approach offers several distinct advantages:

\begin{itemize}
    \item \textbf{Symmetry and Positivity:} The formulation transforms the standard indefinite saddle-point problem into a discrete linear system that is inherently symmetric and positive definite (SPD). This allows for the use of highly efficient iterative solvers, such as the conjugate gradient (CG) method.
    \item \textbf{Mesh Flexibility:} The method inherits the standard WG capability of handling arbitrary polytopal meshes with pendant nodes, eliminating the need for matching grids or specialized continuous basis functions.
    \item \textbf{No Inf-Sup Requirement:} The least-squares framework bypasses the need to satisfy the traditional LBB compatibility condition, greatly simplifying the choice of velocity and pressure finite element spaces.
\end{itemize}

This paper provides a rigorous theoretical and numerical foundation for the LS-WG scheme. We establish the existence and uniqueness of the discrete solution and derive optimal-order error estimates in a carefully defined discrete energy norm. Specifically, we prove convergence rates of $\mathcal{O}(h^k)$ for the discrete projection error and $\mathcal{O}(h^{k-1})$ for the global approximation error when employing polynomials of degree $k \ge 1$ for the velocity field and $k-1$ for the pressure. Finally, we present extensive numerical experiments to validate these theoretical convergence rates and demonstrate the robustness and geometric flexibility of the algorithm.

The remainder of this paper is organized as follows. In Section 2, we define the fundamental discrete weak differential operators: the discrete weak Laplacian, the discrete weak gradient, and the discrete weak divergence. Section 3 presents the development of the least-squares weak Galerkin finite element scheme for the Stokes equations. The error equations for the LS-WG approximation are derived in Section 4, followed by a rigorous derivation of optimal-order error estimates, specifically establishing the $\mathcal{O}(h^k)$ and $\mathcal{O}(h^{k-1})$ convergence rates in the discrete energy norm, in Section  5. Finally, Section 6 provides a series of numerical experiments to validate the theoretical findings and demonstrate the robustness of the method.

Let $D \subset \mathbb{R}^d$ ($d=2, 3$) be an open, bounded domain with a Lipschitz continuous boundary. We employ standard notation for Sobolev spaces. For any $s \ge 0$, $H^s(D)$ denotes the Sobolev space with the associated inner product $(\cdot,\cdot)_{s,D}$, norm $\|\cdot\|_{s,D}$, and seminorm $|\cdot|_{s,D}$. 
The space $H^0(D)$ coincides with $L^2(D)$, for which the inner product and norm are simplified to $(\cdot,\cdot)_{D}$ and $\|\cdot \|_{D}$, respectively. In cases where $D=\Omega$, the subscript $D$ will be omitted for brevity.

For vector-valued functions, we define the space $H(\text{div}; D)$ as the set of functions in $[L^2(D)]^d$ whose divergence also belongs to $L^2(D)$:
\begin{equation*}
H(\text{div}, D) = \left\{ \bv \in [L^2(D)]^d : \nabla \cdot \bv \in L^2(D) \right\}.
\end{equation*} 

\section{Discrete Weak Differential Operators}  

The hallmark of the weak Galerkin method is the replacement of classical (strong) derivatives with weak derivatives. These weak operators are defined on a class of weak functions that are discontinuous across element boundaries. For the least-squares WG formulation of the Stokes problem \eqref{moment}--\eqref{bc}, we require three specific weak differential  operators: the weak Laplacian, the weak divergence, and the weak gradient. For completeness, this section reviews the formal definitions of these weak  differential operators and their discrete counterparts \cite{wz2025, wy2015, wzholm}.

\subsection{Discrete Weak Laplacian}

Let $T$ be a polygonal or polyhedral domain in $\mathbb{R}^d$ ($d=2,3$) with boundary $\partial T$. We define a  {weak function} for the velocity field on $T$ as an ordered triplet $\bv=\{\bv_0, \bv_b, \mathbf{v}_g\}$, where the components are defined as follows:  $\bv_0 \in [L^2(T)]^d$ represents the value of $\bv$ in the interior of $T$;
  $\bv_b \in [L^2(\partial T)]^d$  represents the value of $\bv$ on the boundary $\partial T$;
   $\mathbf{v}_g \in [L^2(\partial T)]^{d\times d}$  represents the value of the gradient $\nabla \bv$ on the boundary $\partial T$.
Crucially, $\bv_b$ and $\mathbf{v}_g$ are defined independently and are not required to be the traces of $\bv_0$ or $\nabla \bv_0$ on $\partial T$. We denote the space of such weak functions on $T$ by:
\a{
\mathcal{W}(T) = \bigg\{ \bv= \{\bv_0, \bv_b, \mathbf{v}_g\}
  & : \bv_0 \in [L^2(T)]^d,\\
  &\quad  \bv_b \in [L^2(\partial T)]^d, \ \mathbf{v}_g \in [L^2(\partial T)]^{d\times d} \bigg\}. }
 
The  {weak Laplacian} of a function $\bv \in \mathcal{W}(T)$, denoted by $\Delta_w \bv$, is a linear functional in the dual space of $[H^2(T)]^d$ defined by:
\a{
(\Delta_w \bv, \bw)_T 
  &=(\bv_0, \Delta\bw)_T - \langle \bv_b, \nabla \bw \cdot \mathbf{n}\rangle_{\partial T}
  \\ & \quad \  + \langle \mathbf{v}_g \cdot \mathbf{n}, \bw\rangle_{\partial T}, \quad \forall \bw \in [H^2(T)]^d,
   }
where $\mathbf{n}$ is the outward unit normal vector on $\partial T$. 

Let $P_r(T)$ denote the space of polynomials of degree at most $r$ on $T$. The  {discrete weak Laplacian}, $\Delta_{w,r,T} \bv$, is the unique polynomial in $[P_r(T)]^d$ satisfying:
\an{\label{dislap}\ad{
(\Delta_{w,r,T} \bv, \bw)_T & = (\bv_0, \Delta \bw)_T - \langle \bv_b, \nabla \bw \cdot \mathbf{n} \rangle_{\partial T} \\
  &\quad \  + \langle \mathbf{v}_g \cdot \mathbf{n}, \bw \rangle_{\partial T},
    \quad \forall \bw \in [P_r(T)]^d.
 } }
If $\bv_0$ possesses sufficient regularity (e.g., $\bv_0 \in [H^2(T)]^d$), the discrete weak Laplacian can be equivalently expressed through integration by parts:
\an{ \label{dislap2} \ad{
(\Delta_{w,r,T} \bv, \bw)_T & = (\Delta \bv_0, \bw)_T + \langle \bv_0 - \bv_b, \nabla \bw \cdot \mathbf{n} \rangle_{\partial T} \\
  &\quad \ + \langle (\mathbf{v}_g - \nabla \bv_0) \cdot \mathbf{n}, \bw \rangle_{\partial T}, 
    \quad \forall \bw \in [P_r(T)]^d. } }

\subsection{Discrete Weak Divergence}

For any $\bv \in \mathcal{W}(T)$, we define the weak divergence $\nabla_w \cdot \bv$ as a linear functional in the dual space of $H^1(T)$ such that for each $\varphi \in H^1(T)$:
\begin{equation*}\label{wd}
(\nabla_w \cdot \bv, \varphi)_T = -(\bv_0, \nabla\varphi)_T + \langle \bv_b \cdot \mathbf{n}, \varphi\rangle_{\partial T}.
\end{equation*}
The  {discrete weak divergence} operator $\nabla_{w,r,T} \cdot \bv \in P_r(T)$ is the unique polynomial satisfying:
\begin{equation}\label{d-d}
(\nabla_{w,r,T} \cdot \bv, \varphi)_T = -(\bv_0, \nabla\varphi)_T + \langle \bv_b \cdot \mathbf{n}, \varphi\rangle_{\partial T}, \quad \forall \varphi \in P_r(T).
\end{equation}
By integration by parts, for $\bv_0 \in H(\text{div}, T)$, we have the equivalent form:
\begin{equation}\label{div2}
(\nabla_{w,r,T} \cdot \bv, \varphi)_T = (\nabla \cdot \bv_0, \varphi)_T + \langle (\bv_b - \bv_0) \cdot \mathbf{n}, \varphi\rangle_{\partial T}, \quad \forall \varphi \in P_r(T).
\end{equation}

\subsection{Discrete Weak Gradient}

For the pressure variable, we define a weak scalar function space $\mathcal{V}(T)$ as:
\begin{equation*} 
\mathcal{V}(T) = \{q = \{q_0, q_b\} : q_0 \in L^2(T), \ q_b \in L^2(\partial T)\}.
\end{equation*}
The  {weak gradient} of $q \in \mathcal{V}(T)$, denoted $\nabla_w q$, is a linear functional in the dual space of $[H^1(T)]^d$ such that for each $\bw \in [H^1(T)]^d$:
\begin{equation*}\label{wg}
(\nabla_w q, \bw)_T = -(q_0, \nabla \cdot \bw)_T + \langle q_b, \bw \cdot \mathbf{n}\rangle_{\partial T}.
\end{equation*}
The  {discrete weak gradient} $\nabla_{w,r, T} q \in [P_r(T)]^d$ is defined as the unique polynomial vector satisfying:
\begin{equation}\label{d-g}
(\nabla_{w,r, T} q, \bw)_T = -(q_0, \nabla \cdot \bw)_T + \langle q_b, \bw \cdot \mathbf{n}\rangle_{\partial T}, \quad \forall \bw \in [P_r(T)]^d.
\end{equation}
For $q_0 \in H^1(T)$, this is equivalently given by:
\begin{equation}\label{nabla2}
(\nabla_{w,r, T} q, \bw)_T = (\nabla q_0, \bw)_T + \langle q_b - q_0, \bw \cdot \mathbf{n}\rangle_{\partial T}, \quad \forall \bw \in [P_r(T)]^d.
\end{equation}
\section{A Least Squares Weak Galerkin Finite Element Scheme} 

Let $\mathcal{T}_h$ be a partition of the domain $\Omega$ with mesh size $h$, consisting of shape regular
   polygons or polyhedra. 
Denote by $\mathcal{E}_h$ the set of all edges (in 2D) or flat faces (in 3D) in $\mathcal{T}_h$, and let $\mathcal{E}_h^0 = \mathcal{E}_h \setminus \partial\Omega$ be the set of all interior edges or faces.

For any integer $k \ge 2$, we define the weak Galerkin finite element space for the velocity variable as follows:
\an{\label{W-h} \ad{ 
\mathcal{W}_h &= \{ \bv=\{\bv_0, \bv_b, \mathbf{v}_g\} 
   : \bv_0 \in [P_{k}(T)]^d, \ \bv_b \in [P_{k}(e)]^d, \\
   &\qquad \  \mathbf{v}_g \in [P_{k-1}(e)]^{d\times d},
     \ \forall T \in \mathcal{T}_h, \ \forall e \subset \partial T \}.
  } }
We emphasize that there is a single, uniquely defined value for $\bv_b$ on each edge $e \in \mathcal{E}_h^0$. For the pressure variable, the associated weak finite element space is given by:
\an{\label{V-h} \ad{ 
\mathcal{V}_h  & = \bigg\{ q=\{q_0, q_b\} : q_0 \in P_{k-1}(T) \cap L_0^2(\Omega), \\
   &\qquad \   q_b \in P_{k-1}(e), \ \forall T \in \mathcal{T}_h,  \ \forall e \subset \partial T 
     \bigg\}.
  } }
Let $\mathcal{W}_h^0$ denote the subspace of $\mathcal{W}_h$ consisting of discrete weak functions with vanishing boundary values; that is,
\begin{equation*}
\mathcal{W}_h^0 = \left\{ \bv=\{\bv_0, \bv_b, \mathbf{v}_g\} \in \mathcal{W}_h : \bv_b = \mathbf{0} \text{ on } \partial\Omega \right\}.
\end{equation*}

The discrete weak Laplacian $\Delta_{w,k-2}$ and the discrete weak divergence $(\nabla_{w,k-1}\cdot)$ on the space $\mathcal{W}_h$ are computed locally on each element $T$ using \eqref{dislap} and \eqref{d-d}, respectively. More precisely, they are defined by:
\begin{align*}
(\Delta_{w,k-2}\bv)|_T &= \Delta_{w,k-2, T} (\bv|_T), \quad \forall \bv \in \mathcal{W}_h,\\
(\nabla_{w,k-1}\cdot\bv)|_T &= \nabla_{w,k-1, T}\cdot (\bv|_T), \quad \forall \bv \in \mathcal{W}_h.
\end{align*}
For notational simplicity, we shall hereafter drop the subscripts $k-2$ and $k-1$ and denote the discrete weak Laplacian and discrete weak divergence simply by $\Delta_w$ and $\nabla_w\cdot$, respectively. 

Similarly, the discrete weak gradient $(\nabla_{w,k-2})$ on the space $\mathcal{V}_h$ is computed using \eqref{d-g} on each element $T$:
\begin{equation*}
(\nabla_{w,k-2} q)|_T = \nabla_{w,k-2, T} (q|_T), \quad \forall q \in \mathcal{V}_h.
\end{equation*}
Again, we will drop the subscript $k-2$ and denote the discrete weak gradient simply by $\nabla_w$.

Let $Q_{0}^k$, $Q_{0}^{k-1}$, and $Q_{0}^{k-2}$ denote the local $L^2$ projection operators from $L^2(T)$ onto $P_k(T)$, $P_{k-1}(T)$, and $P_{k-2}(T)$, respectively. For each face $e \in \mathcal{E}_h$, let $Q_{b}^k$ and $Q_{b}^{k-1}$ denote the $L^2$ projections from $L^2(e)$ onto $P_{k}(e)$ and $P_{k-1}(e)$, respectively.

We are now in a position to present the least squares weak Galerkin finite element scheme for the Stokes equations \eqref{moment}--\eqref{bc}. First, we introduce the following three bilinear forms: for $\bu, \bv, \bw\in \W_h$ and $p, q\in \V_h$,
\begin{align*}
s_1(\bv, \bw) &= \sum_{T\in \mathcal{T}_h}  h_T^{-1}\langle \bv_0-\bv_b, \bw_0-\bw_b\rangle_{\partial T}
   \\ &\qquad\qquad + h_T \langle \nabla \bv_0-\mathbf{v}_g, \nabla\bw_0-\mathbf{w}_g\rangle_{\partial T}, \\
s_2(p, q) &= \sum_{T\in \mathcal{T}_h} h_T\langle p_0-p_b, q_0-q_b\rangle_{\partial T}, \\
a((\bu,p),(\bv, q)) &= \sum_{T\in \mathcal{T}_h}  (-\Delta_w\bu+\nabla_w p, -\Delta_w\bv+\nabla_w q)_T + (\nabla_w\cdot\bu, \nabla_w\cdot\bv)_T.
\end{align*}

\begin{algorithm}
A numerical approximation for the Stokes problem \eqref{moment}--\eqref{bc} 
\ is obtained \ by seeking $\bu_h=\{\bu_0,\bu_b,$ $ \mathbf{u}_g\} \in \mathcal{W}_h$ and $p_h=\{p_0, p_b\} \in \mathcal{V}_h$ such that $\bu_b = Q_b^k\bg$ on $\partial\Omega$ and
\begin{equation}\label{lswg}
a((\bu_h,p_h),(\bv, q)) + s_1(\bu_h,\bv) + s_2(p_h,q) = \sum_{T\in \mathcal{T}_h} (\bbf, -\Delta_w\bv+\nabla_w q)_T,
\end{equation}
for all test functions $\bv=\{\bv_0,\bv_b, \mathbf{v}_g\} \in \mathcal{W}_h^0$ and $q=\{q_0, q_b\} \in \mathcal{V}_h$.
\end{algorithm}
 
\begin{theorem}\label{unique}
The Least Squares Weak Galerkin method \eqref{lswg} possesses a unique solution.  
\end{theorem}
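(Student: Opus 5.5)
Since \eqref{lswg} is a square, finite-dimensional linear system (once the boundary data $\bu_b=Q_b^k\bg$ is fixed), existence follows from uniqueness. It therefore suffices to show that the homogeneous problem has only the trivial solution. So I take $\bbf=\mathbf{0}$ and $\bg=\mathbf{0}$, which puts $\bu_h\in\mathcal{W}_h^0$, and I choose the test functions $\bv=\bu_h$ and $q=p_h$. Every term on the left of \eqref{lswg} is a sum of squares, so each one vanishes. On every $T\in\mathcal{T}_h$ this gives
\[
-\Delta_w\bu_h+\nabla_w p_h=\mathbf{0},\qquad \nabla_w\cdot\bu_h=0,
\]
and on every $\partial T$ it gives
\[
\bu_0=\bu_b,\qquad \nabla\bu_0=\mathbf{u}_g,\qquad p_0=p_b .
\]

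Next I convert the weak operators into strong ones elementwise using \eqref{dislap2}, \eqref{div2} and \eqref{nabla2}. With the boundary identities above, all boundary terms in these formulas drop out, so $\Delta_w\bu_h=Q_0^{k-2}\Delta\bu_0$, $\nabla_w\cdot\bu_h=Q_0^{k-1}(\nabla\cdot\bu_0)$ and $\nabla_w p_h=Q_0^{k-2}\nabla p_0$. Degree counting removes the projections: $\Delta\bu_0\in[P_{k-2}(T)]^d$, $\nabla\cdot\bu_0\in P_{k-1}(T)$ and $\nabla p_0\in[P_{k-2}(T)]^d$. Hence on each element
\[
-\Delta\bu_0+\nabla p_0=\mathbf{0},\qquad \nabla\cdot\bu_0=0 .
\]

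To pass from elementwise to global information, I test the momentum identity with $\bu_0$ on each $T$, integrate by parts, and sum over elements:
\[
0=\sum_{T}\Big(\|\nabla\bu_0\|_T^2-(p_0,\nabla\cdot\bu_0)_T-\langle(\nabla\bu_0-p_0 I)\mathbf{n},\bu_0\rangle_{\partial T}\Big).
\]
The divergence term vanishes because $\nabla\cdot\bu_0=0$. For the boundary term, I replace $\bu_0$ by $\bu_b$, $\nabla\bu_0$ by $\mathbf{u}_g$ and $p_0$ by $p_b$. On each interior face $e$ these quantities are single-valued while $\mathbf{n}$ changes sign between the two neighbouring elements, so the contributions cancel. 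On $\partial\Omega$ the term vanishes because $\bu_b=\mathbf{0}$. Therefore $\nabla\bu_0=0$ on each $T$, so $\bu_0$ is piecewise constant. Since $\bu_0=\bu_b$ is single-valued across faces and zero on $\partial\Omega$, it follows that $\bu_0\equiv\mathbf{0}$, and then $\bu_b=\mathbf{0}$ and $\mathbf{u}_g=\nabla\bu_0=\mathbf{0}$. The momentum identity now reduces to $\nabla p_0=\mathbf{0}$ on each $T$, so $p_0$ is piecewise constant. Because $p_0=p_b$ is single-valued across interior faces, $p_0$ is a global constant, and the constraint $p_0\in L^2_0(\Omega)$ forces $p_0=0$ and hence $p_b=0$.

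The step I expect to be the main obstacle is the cancellation of the interface terms. It requires $\mathbf{u}_g$ and $p_b$ to be single-valued on interior faces, just as the paper states explicitly for $\bv_b$. The definitions of \eqref{W-h} and \eqref{V-h} are written face by face, so I read them in that sense. If $\mathbf{u}_g$ were instead allowed to be double-valued, the flux term would not cancel. In that case I would first try to obtain $\bu_0\in[H^1_0(\Omega)]^d$ and $p_0\in H^1(\Omega)$ from the single-valuedness of $\bu_b$ and $p_b$, and then invoke uniqueness for the continuous Stokes problem. However, this route still needs continuity of the normal stress $(\nabla\bu_0-p_0I)\mathbf{n}$, so the single-valued reading of $\mathbf{u}_g$ seems essential.
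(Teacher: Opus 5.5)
Your argument is correct. Up to the elementwise strong system $-\Delta\bu_0+\nabla p_0=\mathbf{0}$, $\nabla\cdot\bu_0=0$ it is the paper's proof: you use the same test functions and the same conversion via \eqref{dislap2}, \eqref{div2}, \eqref{nabla2}. The final step is where the two routes differ.

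The paper asserts that $\bu_0$, $\nabla\bu_0$ and $p_0$ are globally continuous. From this it concludes that the equations hold on all of $\Omega$ with $\bu_0\in[H_0^1(\Omega)]^d$ and $p_0\in L_0^2(\Omega)$, and then cites uniqueness for the continuous Stokes problem. That is shorter and leans on known theory. The price is that the elementwise identities must be upgraded to global ones, which is what the paper's remark $\nabla\bu_0\in[H(\mathrm{div},\Omega)]^{d\times d}$ is for.

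You instead carry out the energy argument behind that uniqueness result directly at the discrete level. You test elementwise with $\bu_0$, integrate by parts, and cancel interface terms through the single-valued discrete normal stress $(\mathbf{u}_g-p_bI)\mathbf{n}$. Your version is self-contained and never needs a global distributional identity. It also shows exactly which inter-element couplings are used: continuity of $\bu_0$ and single-valuedness of the normal stress. Finally, it gets $p_0=0$ from an explicit piecewise-constant, mean-zero argument.

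Your worry about reading $\mathbf{u}_g$ and $p_b$ as single-valued is well placed, but it is not a gap relative to the paper. The paper's proof tacitly makes the same assumption when it concludes that $\nabla\bu_0$ and $p_0$ are continuous. The assumption is also necessary for the theorem. If $p_b$ were double-valued, take $\bu_h=\mathbf{0}$, let $p_0$ be a nonzero mean-zero piecewise constant, and set $p_b=p_0|_T$ on each $\partial T$. Then $\nabla_w p_h=\mathbf{0}$ and $s_2(p_h,p_h)=0$, so this pair is a nonzero solution of the homogeneous version of \eqref{lswg}.
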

\begin{proof}
It suffices to show that the solution to \eqref{lswg} is identically zero when the data vanishes (i.e., $\bbf=\mathbf{0}$ and $\bg=\mathbf{0}$). Assuming $\bbf=\mathbf{0}$ and $\bg=\mathbf{0}$, we choose the test functions $\bv=\bu_h$ and $q=p_h$ in \eqref{lswg} to obtain:
\begin{equation*}
a((\bu_h,p_h),(\bu_h, p_h)) + s_1(\bu_h,\bu_h) + s_2(p_h,p_h) = 0.
\end{equation*}
Due to the positivity of these bilinear forms, this implies that $-\Delta_w\bu_h+\nabla_w p_h = \mathbf{0}$ and $\nabla_w \cdot \bu_h = 0$ on each element $T\in \mathcal{T}_h$. Furthermore, the stabilizers dictate that $\bu_0=\bu_b$, $\nabla\bu_0=\mathbf{u}_g$, and $p_0=p_b$ on each $\partial T$. 

Using \eqref{dislap2} alongside the facts that $\bu_0=\bu_b$ and $\nabla\bu_0=\mathbf{u}_g$ on each $\partial T$, we have
\begin{equation*}
(\Delta_{w} \bu_h, \bw)_T = (\Delta \bu_0, \bw)_T, \quad \forall \bw \in [P_{k-2}(T)]^d,
\end{equation*}
which yields $\Delta_{w} \bu_h = \Delta \bu_0$. 

Similarly, using \eqref{div2} and $\bu_0=\bu_b$ on each $\partial T$ gives
\begin{equation*}
(\nabla_w \cdot \bu_h, \phi)_T = (\nabla \cdot \bu_0, \phi)_T, \quad \forall \phi \in P_{k-1}(T),
\end{equation*}
yielding $\nabla_w \cdot \bu_h = \nabla \cdot \bu_0$.

Applying \eqref{nabla2} and $p_0=p_b$ on each $\partial T$ yields
\begin{equation*}
(\nabla_w p_h, \bw)_T = (\nabla p_0, \bw)_T, \quad \forall \bw \in [P_{k-2}(T)]^d,
\end{equation*}
which implies $\nabla_w p_h = \nabla p_0$.

Because $\bu_0=\bu_b$ and $\nabla\bu_0=\mathbf{u}_g$ across all element boundaries, it follows that $\bu_0$ and its gradient are globally continuous, implying $\nabla\bu_0 \in [H(\text{div},\Omega)]^{d \times d}$. Likewise, $p_0=p_b$ implies that $p_0$ is continuous over the entire domain $\Omega$. Substituting $\Delta_{w} \bu_h = \Delta \bu_0$, $\nabla_w \cdot \bu_h = \nabla \cdot \bu_0$, and $\nabla_w p_h = \nabla p_0$ into the identities $-\Delta_w\bu_h+\nabla_w p_h=\mathbf{0}$ and $\nabla_w\cdot\bu_h=0$ yields the strong classical system:
\begin{align}
-\Delta\bu_0+\nabla p_0 &= \mathbf{0} \quad \text{in } \Omega, \label{md1}\\
\nabla\cdot\bu_0 &= 0 \quad \text{in } \Omega, \label{md2}\\
\bu_0 &= \mathbf{0} \quad \text{on } \partial\Omega. \label{md3}
\end{align}
Recall that $p_0 \in L_0^2(\Omega)$. Therefore, the continuous system \eqref{md1}--\eqref{md3} admits only the trivial solution $\bu_0 \equiv \mathbf{0}$ and $p_0 \equiv 0$ in $\Omega$. This forces the boundary terms $\bu_b \equiv \mathbf{0}$, $\bu_g \equiv \mathbf{0}$  and $p_b \equiv 0$, which ultimately guarantees that $\bu_h \equiv \mathbf{0}$ and $p_h \equiv 0$ throughout $\Omega$. 

This completes the proof. 
\end{proof}

We define the energy norm on the finite element space $\mathcal{W}_h^0 \times \mathcal{V}_h$ by
\begin{equation}
\3bar (\bu_h, p_h)\3bar^2 = a((\bu_h, p_h), (\bu_h, p_h)) + s_1(\bu_h, \bu_h) + s_2(p_h, p_h).
\end{equation}
Following the mathematical logic established in Theorem \ref{unique}, it is straightforward to verify that $\3bar \cdot \3bar$ satisfies all the properties of a norm on $\mathcal{W}_h^0 \times \mathcal{V}_h$, providing a rigorous framework for the subsequent optimal-order error analysis.

\section{Error Equations}

In this section, we derive the error equation that governs the relationship between the exact solutions $(\bu, p)$ and their LS-WG approximations $(\bu_h, p_h)$. Let $(\bu, p)$ be the exact solutions to the Stokes problem \eqref{moment}--\eqref{bc}, and let $\bu_h \in \mathcal{W}_h$ and $p_h\in \mathcal{V}_h$ be the solutions to the discrete problem \eqref{lswg}. We define the projection operators $Q_h^1 \bu$ and $Q_h^2 p$ into the weak spaces, and the corresponding error functions, as follows:
\begin{align*}
e_{\bu_h} &= \bu_h - Q_h^1 \bu = \{\bu_0 - Q_0^k \bu, \ \bu_b - Q_b^k \bu, \ \mathbf{u}_g - Q_b^{k-1}(\nabla \bu)\},  \\
e_{p_h} &= p_h - Q_h^2 p = \{p_0 - Q_0^{k-1}p, \ p_b - Q_b^{k-1}p\}.  
\end{align*}
 
\begin{lemma}[Commutative Properties]\label{Lemma:commute}
The projection operators $Q_h^1$ and $Q_h^2$ satisfy the following commutative properties:
\begin{align}
\Delta_w(Q_h^1 \bw) &= Q_0^{k-2}(\Delta \bw), \quad \forall \bw \in [H^2(\Omega)]^d, \label{eq:commute1} \\
\nabla_w\cdot(Q_h^1 \bw) &= Q_0^{k-1}(\nabla \cdot \bw), \quad \forall \bw \in [H^1(\Omega)]^d, \label{eq:commute2} \\
\nabla_w(Q_h^2 q) &= Q_0^{k-2}(\nabla q), \quad \forall q \in H^1(\Omega). \label{eq:commute3}
\end{align}
\end{lemma}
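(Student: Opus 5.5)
Each identity will be proved by testing against an arbitrary polynomial in the relevant local space on a single element $T$. We insert the projected weak function into the defining relation of the discrete weak operator (\eqref{dislap}, \eqref{d-d}, or \eqref{d-g}). Then we remove each projection using the orthogonality of $L^2$ projections, and finish with integration by parts.

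For \eqref{eq:commute1}, take $\bw\in[H^2(\Omega)]^d$ and any $\bphi\in[P_{k-2}(T)]^d$; we write $\bphi$ rather than $\bw$ for the test function to avoid a clash. By \eqref{dislap} with $r=k-2$,
\[
(\Delta_w Q_h^1\bw,\bphi)_T=(Q_0^k\bw,\Delta\bphi)_T-\langle Q_b^k\bw,\nabla\bphi\cdot\mathbf{n}\rangle_{\partial T}+\langle Q_b^{k-1}(\nabla\bw)\cdot\mathbf{n},\bphi\rangle_{\partial T}.
\]
The projections can now be dropped term by term:
\begin{itemize}
\item $\Delta\bphi\in[P_{k-4}(T)]^d\subset[P_k(T)]^d$, so $Q_0^k$ disappears.
\item On each flat face $e$, $\mathbf{n}$ is constant, so $\nabla\bphi\cdot\mathbf{n}|_e\in[P_{k-3}(e)]^d\subset[P_k(e)]^d$, and $Q_b^k$ disappears.
\item $\bphi|_e\in[P_{k-2}(e)]^d\subset[P_{k-1}(e)]^d$, so $Q_b^{k-1}$ disappears.
\end{itemize}
What remains is $(\bw,\Delta\bphi)_T-\langle\bw,\nabla\bphi\cdot\mathbf{n}\rangle_{\partial T}+\langle\nabla\bw\cdot\mathbf{n},\bphi\rangle_{\partial T}$. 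Integrating by parts twice turns this into $(\Delta\bw,\bphi)_T$, which equals $(Q_0^{k-2}\Delta\bw,\bphi)_T$. Both sides lie in $[P_{k-2}(T)]^d$, so they agree.

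For \eqref{eq:commute2}, take $\varphi\in P_{k-1}(T)$ and use \eqref{d-d} with $r=k-1$. The interior term is $-(Q_0^k\bw,\nabla\varphi)_T$; since $\nabla\varphi\in[P_{k-2}]^d$, the projection drops. The boundary term is $\langle Q_b^k\bw\cdot\mathbf{n},\varphi\rangle_{\partial T}$; since $\varphi\mathbf{n}|_e\in[P_{k-1}(e)]^d$, that projection drops as well. A single integration by parts then gives $(\nabla\cdot\bw,\varphi)_T=(Q_0^{k-1}\nabla\cdot\bw,\varphi)_T$.

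For \eqref{eq:commute3}, take $\bphi\in[P_{k-2}(T)]^d$ and use \eqref{d-g}. We have $\nabla\cdot\bphi\in P_{k-3}\subset P_{k-1}$ and $\bphi\cdot\mathbf{n}|_e\in P_{k-2}(e)\subset P_{k-1}(e)$, so both $Q_0^{k-1}$ and $Q_b^{k-1}$ drop. Integration by parts then yields $(\nabla q,\bphi)_T=(Q_0^{k-2}\nabla q,\bphi)_T$.

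The only delicate point is the bookkeeping of polynomial degrees on faces. This relies on each face being flat, so that $\mathbf{n}$ is piecewise constant on $\partial T$ and multiplying by it does not raise degrees. Every degree inclusion above holds for $k\ge2$.

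The Sobolev regularity assumed in each identity ($H^2$, $H^1$, $H^1$) is exactly what is needed for the traces $\bw|_{\partial T}$, $\nabla\bw|_{\partial T}$ and $q|_{\partial T}$ to lie in $L^2(\partial T)$. It also justifies the integrations by parts on the Lipschitz element $T$.
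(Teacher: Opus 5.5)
Your proposal is correct and follows the paper's proof essentially step for step. On each element you test against an arbitrary polynomial of the appropriate degree, substitute the projected weak function into \eqref{dislap}, \eqref{d-d}, or \eqref{d-g}, drop the $L^2$ projections by orthogonality, and integrate by parts to reach $(\Delta\bw,\cdot)_T$, $(\nabla\cdot\bw,\cdot)_T$, or $(\nabla q,\cdot)_T$; the only difference is that you spell out the degree bookkeeping on flat faces (with $\mathbf{n}$ constant on each face), which the paper uses tacitly when it removes $Q_0^k$, $Q_b^k$, $Q_b^{k-1}$ and $Q_0^{k-1}$.
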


\begin{proof}
For any test function $\bv \in [P_{k-2}(T)]^d$, the definition of the discrete weak Laplacian \eqref{dislap} gives:
\begin{align*}
(\Delta_w (Q_h^1 \bw), \bv)_T &= (Q_0^k \bw, \Delta \bv)_T - \langle Q_b^k \bw, \nabla \bv\cdot \mathbf{n} \rangle_{\partial T} + \langle Q_b^{k-1}(\nabla \bw) \cdot \mathbf{n}, \bv \rangle_{\partial T} \\
&= (\bw, \Delta \bv)_T - \langle \bw, \nabla \bv \cdot \mathbf{n} \rangle_{\partial T} + \langle \nabla \bw \cdot \mathbf{n}, \bv \rangle_{\partial T} \\
&= (\Delta \bw, \bv)_T = (Q_0^{k-2} (\Delta \bw), \bv)_T.
\end{align*}
This confirms \eqref{eq:commute1}.

For any scalar function $\varphi \in P_{k-1}(T)$, the definition of the discrete weak divergence \eqref{d-d} implies:
\begin{align*}
(\nabla_w \cdot (Q_h^1 \bw), \varphi)_T &= -(Q_0^{k} \bw, \nabla \varphi)_T + \langle Q_b^k \bw \cdot \mathbf{n}, \varphi \rangle_{\partial T} \\
&= -(\bw, \nabla \varphi)_T + \langle \bw \cdot \mathbf{n}, \varphi \rangle_{\partial T} \\
&= (\nabla \cdot \bw, \varphi)_T = (Q_0^{k-1} (\nabla \cdot \bw), \varphi)_T.
\end{align*}
This confirms \eqref{eq:commute2}.

Finally, for any vector function $\bv \in [P_{k-2}(T)]^d$, the definition of the discrete weak gradient \eqref{d-g} yields:
\begin{align*}
(\nabla_w (Q_h^2 q), \bv)_T &= -(Q_0^{k-1} q, \nabla \cdot \bv)_T + \langle Q_b^{k-1} q, \bv \cdot \mathbf{n} \rangle_{\partial T} \\
&= -(q, \nabla \cdot \bv)_T + \langle q, \bv \cdot \mathbf{n} \rangle_{\partial T} \\
&= (\nabla q, \bv)_T = (Q_0^{k-2} (\nabla q), \bv)_T.
\end{align*}
This confirms \eqref{eq:commute3} and completes the proof.
\end{proof}

\begin{lemma}[Error Equation]\label{errorequa}
For any test functions $\bv \in \mathcal{W}_h^0$ and $q \in \mathcal{V}_h$, the error functions $e_{\bu_h}$ and $e_{p_h}$ satisfy the following identity:
\an{ \label{erroreqn}\ad{ &\quad \
a( (e_{\bu_h}, e_{p_h}), (\bv, q)) + s_1(e_{\bu_h}, \bv) + s_2(e_{p_h}, q)\\
 & = -s_1(Q_h^1 \bu, \bv) - s_2(Q_h^2 p, q). }}
\end{lemma}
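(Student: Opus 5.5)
The plan is to show that the exact projections $(Q_h^1\bu, Q_h^2 p)$ satisfy the discrete equations up to the stabilizer terms only. The identity then follows by subtracting from \eqref{lswg} and using bilinearity. The computation rests on the commutative properties of Lemma \ref{Lemma:commute}, together with the orthogonality of the local $L^2$ projections against the polynomial ranges of the discrete weak operators.

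\textbf{Step 1: the consistency term.} Fix $\bv\in\mathcal{W}_h^0$ and $q\in\mathcal{V}_h$. On each $T$, \eqref{eq:commute1} and \eqref{eq:commute3} give
\[
-\Delta_w(Q_h^1\bu)+\nabla_w(Q_h^2p)=Q_0^{k-2}(-\Delta\bu+\nabla p)=Q_0^{k-2}\bbf,
\]
using the momentum equation \eqref{moment}. Likewise, \eqref{eq:commute2} and \eqref{cont} give $\nabla_w\cdot(Q_h^1\bu)=Q_0^{k-1}(\nabla\cdot\bu)=0$. Since $-\Delta_w\bv+\nabla_w q\in[P_{k-2}(T)]^d$, the projection $Q_0^{k-2}$ can be removed inside the inner product. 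Hence
\[
a((Q_h^1\bu,Q_h^2p),(\bv,q))=\sum_{T\in\mathcal{T}_h}(\bbf,-\Delta_w\bv+\nabla_w q)_T .
\]
Adding $s_1(Q_h^1\bu,\bv)+s_2(Q_h^2p,q)$ to both sides, the projected exact solution satisfies \eqref{lswg} with right-hand side increased by exactly these stabilizer terms.

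\textbf{Step 2: subtraction.} Subtract the identity from Step 1 from \eqref{lswg}, which holds for the same test functions. By the bilinearity of $a$, $s_1$ and $s_2$, the left-hand side becomes the stated expression in $e_{\bu_h}=\bu_h-Q_h^1\bu$ and $e_{p_h}=p_h-Q_h^2p$. The right-hand side becomes $-s_1(Q_h^1\bu,\bv)-s_2(Q_h^2p,q)$, which is \eqref{erroreqn}.

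\textbf{Main obstacle: regularity and boundary data.} The point needing care is the use of Lemma \ref{Lemma:commute}, which requires $\bu\in[H^2(\Omega)]^d$ and $p\in H^1(\Omega)$; I will assume this regularity, as the error analysis implicitly does. Two further compatibility checks are needed:
\begin{itemize}
\item On $\partial\Omega$, $\bu_b=Q_b^k\bg=Q_b^k\bu$, so $e_{\bu_h}\in\mathcal{W}_h^0$.
\item $Q_0^{k-1}p$ has mean zero because $p\in L_0^2(\Omega)$, so $Q_h^2p\in\mathcal{V}_h$.
\end{itemize}
Neither check is strictly needed for the algebraic identity itself, since Step 1 holds for arbitrary test functions. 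Otherwise the argument is a direct substitution.
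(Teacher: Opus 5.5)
Your proof is correct and essentially the same as the paper's. The paper tests the PDE against $-\Delta_w\bv+\nabla_w q$ and $\nabla_w\cdot\bv$ first and then applies the commutative properties and projection orthogonality to reach $a((Q_h^1\bu,Q_h^2p),(\bv,q))=\sum_T(\bbf,-\Delta_w\bv+\nabla_w q)_T$, while you apply the commutative properties first and then the PDE — the same computation in a different order — before the identical subtraction from \eqref{lswg} using bilinearity.
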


\begin{proof}
Testing the exact momentum equation \eqref{moment} with $-\Delta_w \bv + \nabla_w q$ and the continuity equation \eqref{cont} with $\nabla_w \cdot \bv$ on each element $T \in \mathcal{T}_h$, and summing over all elements, we obtain:
\an{ \label{eq:err_proof1} \ad{ &\quad \ 
\sum_{T \in \mathcal{T}_h}   (- \Delta \bu + \nabla p, - \Delta_w \bv + \nabla_w q)_T + (\nabla\cdot \bu, \nabla_w\cdot \bv)_T \\
  &  = \sum_{T \in \mathcal{T}_h} (\bbf, - \Delta_w \bv + \nabla_w q)_T. }}
Note that by definition, $-\Delta_w \bv + \nabla_w q \in [P_{k-2}(T)]^d$ and $\nabla_w \cdot \bv \in P_{k-1}(T)$ on each element $T$. Therefore, we can apply the properties of the $L^2$ projections alongside the commutative properties \eqref{eq:commute1}--\eqref{eq:commute3} to deduce:
\a{ &\quad \ 
(- \Delta \bu, - \Delta_w \bv + \nabla_w q)_T \\
 &= (-Q_0^{k-2}(\Delta \bu), - \Delta_w \bv + \nabla_w q)_T = (-\Delta_w (Q_h^1 \bu), - \Delta_w \bv + \nabla_w q)_T, \\
 &\quad \ (\nabla p, - \Delta_w \bv + \nabla_w q)_T \\
 &= (Q_0^{k-2}(\nabla p), - \Delta_w \bv + \nabla_w q)_T = (\nabla_w (Q_h^2 p), - \Delta_w \bv + \nabla_w q)_T, \\
 &\quad \ (\nabla\cdot \bu, \nabla_w\cdot \bv)_T \\
  &= (Q_0^{k-1}(\nabla\cdot \bu), \nabla_w\cdot \bv)_T = (\nabla_w\cdot (Q_h^1 \bu), \nabla_w\cdot \bv)_T.
  }
Substituting these identities into \eqref{eq:err_proof1} recovers the bilinear form $a(\cdot,\cdot)$:
\begin{equation}\label{eq:err_proof2}
a((Q_h^1 \bu, Q_h^2 p), (\bv, q)) = \sum_{T \in \mathcal{T}_h} (\bbf, - \Delta_w \bv + \nabla_w q)_T.
\end{equation}
Subtracting \eqref{eq:err_proof2} from the discrete LS-WG numerical scheme \eqref{lswg} yields:
\begin{equation*}
a((\bu_h - Q_h^1 \bu, p_h - Q_h^2 p), (\bv, q)) + s_1(\bu_h, \bv) + s_2(p_h, q) = 0.
\end{equation*}
Using the linearity of the stabilization forms, we rewrite $s_1(\bu_h, \bv) = s_1(e_{\bu_h} + Q_h^1 \bu, \bv)$ and $s_2(p_h, q) = s_2(e_{p_h} + Q_h^2 p, q)$. Substituting these into the above equation directly yields the desired error equation:
\begin{equation*}
a((e_{\bu_h}, e_{p_h}), (\bv, q)) + s_1(e_{\bu_h}, \bv) + s_2(e_{p_h}, q) = -s_1(Q_h^1 \bu, \bv) - s_2(Q_h^2 p, q).
\end{equation*}
This completes the proof.
\end{proof}

\section{Error Estimates}\label{Section:ErrorEstimates}

In this section, we establish optimal-order error estimates for the LS-WG approximation in the energy norm. Throughout the analysis, we denote by $C$ a generic positive constant independent of the mesh parameter $h$.

\begin{lemma}[Approximation Properties]\label{lem:approx}
Let $\mathcal{T}_h$ be a shape-regular finite element partition of $\Omega$. For any $\bu \in [H^{k+1}(\Omega)]^d$ and $p\in H^{k}(\Omega)$, the following approximation estimates hold: 
\begin{align}
\sum_{T \in \mathcal{T}_h} \|\bu - Q_0^k \bu\|_{s,T}^2 &\le C h^{2(k+1-s)} \|\bu\|_{k+1}^2, \quad s=0, 1, 2, \label{error_uh} \\
\sum_{T \in \mathcal{T}_h} \|\Delta\bu - Q_0^{k-2} (\Delta\bu)\|_{T}^2 &\le C h^{2(k-1)} \|\bu\|_{k+1}^2, \label{error_uh2} \\
\sum_{T \in \mathcal{T}_h} \|p - Q_0^{k-1} p\|_{s,T}^2 &\le C h^{2(k-s)} \|p\|_{k}^2, \quad s=0, 1, \label{error_ph} \\
\sum_{T \in \mathcal{T}_h} \|\nabla p - Q_0^{k-2} (\nabla p)\|_{T}^2 &\le C h^{2(k-1)} \|p\|_{k}^2. \label{error_ph2}
\end{align}
Furthermore, for any $\phi \in H^1(T)$, the following trace inequality is valid:
\begin{equation}
\|\phi\|_{\partial T}^2 \le C \left( h_T^{-1} \|\phi\|_T^2 + h_T \|\nabla \phi\|_T^2 \right). \label{trace_H1} 
\end{equation}
\end{lemma}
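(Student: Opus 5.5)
These estimates are standard polynomial approximation results on shape-regular polytopal partitions. I would derive them elementwise from a Bramble--Hilbert (Deny--Lions) argument combined with scaling, and then sum over $T\in\mathcal{T}_h$.

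\textbf{Reduction to a local polynomial bound.} The key tool is the following. Shape regularity of polytopal meshes, in the usual WG sense, means that each $T$ is a union of a bounded number of simplices of comparable size and is star-shaped with respect to a ball of radius $\rho_T\ge c h_T$. This gives, for every $m\ge 0$ and $v\in H^{m+1}(T)$, a polynomial $\pi v\in P_m(T)$, for instance the averaged Taylor polynomial, such that
\begin{equation*}
|v-\pi v|_{s,T}\le C h_T^{m+1-s}|v|_{m+1,T},\qquad 0\le s\le m+1,
\end{equation*}
with $C$ depending only on $m$ and the shape-regularity constants.

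\textbf{Passing to the $L^2$ projection.} For the $L^2$ projection $Q_0^m$, I would write
\begin{equation*}
v-Q_0^m v=(v-\pi v)-Q_0^m(v-\pi v).
\end{equation*}
The $L^2$ case follows because $Q_0^m$ is an $L^2$ contraction. For the higher seminorms ($s=1,2$), I would use the polynomial inverse inequality $|\chi|_{s,T}\le C h_T^{-s}\|\chi\|_T$, valid for $\chi\in P_m(T)$ on shape-regular polytopes, to bound
\begin{equation*}
|Q_0^m(v-\pi v)|_{s,T}\le Ch_T^{-s}\|v-\pi v\|_T\le Ch_T^{m+1-s}|v|_{m+1,T}.
\end{equation*}

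\textbf{Applying this to each estimate.}
\begin{itemize}
\item For \eqref{error_uh}, take $m=k$ and $v$ equal to each component of $\bu$, with $s=0,1,2$.
\item For \eqref{error_uh2}, take $m=k-2$ and $v=\Delta\bu\in H^{k-1}$, at $s=0$. This gives $h^{k-1}|\Delta\bu|_{k-1,T}\le Ch^{k-1}|\bu|_{k+1,T}$.
\item For \eqref{error_ph}, take $m=k-1$ and $v=p$.
\item For \eqref{error_ph2}, take $m=k-2$ and $v=\nabla p\in H^{k-1}$.
\end{itemize}
Squaring and summing over $T$ yields the global bounds, with the full norms $\|\cdot\|_{k+1}$ and $\|\cdot\|_k$ dominating the seminorms. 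When $k=1$, the projection onto $P_{k-2}$ is interpreted as zero, and the bounds become trivial stability bounds.

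\textbf{The trace inequality \eqref{trace_H1}.} I would use the simplicial subdivision of $T$. Each face $e\subset\partial T$ is a face of some simplex $S\subset T$ with $|S|\sim h_T^d$. On $S$, apply the divergence theorem to $\phi^2(\mathbf{x}-\mathbf{x}_S)$, where $\mathbf{x}_S$ is the vertex opposite $e$. This gives
\begin{equation*}
\|\phi\|_{e}^2\le C\bigl(h_T^{-1}\|\phi\|_S^2+\|\phi\|_S\|\nabla\phi\|_S\bigr),
\end{equation*}
and Young's inequality finishes it. Summing over the boundedly many faces gives \eqref{trace_H1}.

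\textbf{Main obstacle.} The main difficulty is the uniformity of the constants on general polytopes. Both the Bramble--Hilbert constant and the inverse-inequality constant must depend only on the shape-regularity parameters, not on the geometry of individual elements. I would handle this by invoking the standard polytopal shape-regularity assumptions from the WG literature, namely star-shapedness and a bounded number of comparable sub-simplices, rather than re-deriving them.
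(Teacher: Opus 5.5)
\textbf{Verdict.} The paper states this lemma without proof, quoting it as a standard result. Your derivation is correct, so there is no proof of the paper's to compare against in detail.

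\textbf{What works.} The following steps are all sound:
\begin{itemize}
\item the averaged Taylor polynomial on elements star-shaped with respect to a ball of radius $\rho_T\ge ch_T$;
\item the identity $v-Q_0^m v=(I-Q_0^m)(v-\pi v)$, combined with a polynomial inverse inequality for $s=1,2$;
\item the divergence-theorem trace argument.
\end{itemize}
Your choices of $m$ and $v$ for each of the four estimates are the right ones.

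\textbf{Comparison with the WG literature.} The weak Galerkin papers this lemma comes from typically assume something different: each element has a shape-regular circumscribed simplex with bounded overlap, and there is a pyramid of height comparable to $h_T$ over each face. Under those assumptions, the approximation bounds come from approximating an extension of the function on the circumscribed simplex. The trace inequality comes from exactly your divergence computation, done on that pyramid. Your route is purely local and needs no extension operator or overlap counting, at the cost of assuming star-shapedness. The circumscribed-simplex route does not require star-shapedness.

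\textbf{One imprecision in the trace inequality.} In 3D, a polygonal face $e$ of $T$ (e.g.\ a square face of a cuboid) is not a face of a single sub-tetrahedron. So ``the vertex opposite $e$'' does not exist in general. There are two fixes:
\begin{itemize}
\item Apply your computation to each triangular piece of $e$ in the subdivision and sum.
\item More cleanly, under your own assumption, use the pyramid $P_e$ with base $e$ and apex at the center $\mathbf{x}_T$ of the ball.
\end{itemize}
For the second fix, three facts are needed:
\begin{itemize}
\item $P_e$ lies in $\bar T$ by star-shapedness.
\item The field $\phi^2(\mathbf{x}-\mathbf{x}_T)$ has zero normal flux through the lateral faces of $P_e$.
\item The height of $P_e$ is at least $\rho_T$. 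Segments from points of $e$ to the ball must stay in $\bar T$, so the ball lies on one side of the plane of $e$.
\end{itemize}
This gives $\|\phi\|_e^2\le \rho_T^{-1}\bigl(d\|\phi\|_{P_e}^2+2h_T\|\phi\|_{P_e}\|\nabla\phi\|_{P_e}\bigr)$ with no subdivision, and Young's inequality finishes as before.

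\textbf{A routine point.} The left-hand sides of the first and third estimates use full $H^s$ norms, not seminorms. You therefore need to sum your seminorm bounds over orders $j\le s$ and use $h_T\le h\le C$.
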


\begin{theorem} \label{thm:discrete_error}
Let $(\bu, p) \in [H^{k+1}(\Omega)]^d \times H^k(\Omega)$ be the exact solutions of the Stokes problem \eqref{moment}--\eqref{bc}, and let $(\bu_h, p_h) \in \mathcal{W}_h \times \mathcal{V}_h$ be the LS-WG numerical solution defined by \eqref{lswg}. Then, there exists a constant $C > 0$ such that
\begin{equation}\label{main_estimate}
\3bar (e_{\bu_h}, e_{p_h}) \3bar  \le C h^{k} (\|\bu\|_{k+1} + \|p\|_{k}).
\end{equation}
\end{theorem}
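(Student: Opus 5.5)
The plan is the standard energy argument: test the error equation of Lemma \ref{errorequa} with the error itself and bound the stabilizer terms of the projections. First check that $e_{\bu_h}\in\mathcal{W}_h^0$. On $\partial\Omega$ we have $\bu_b=Q_b^k\bg=Q_b^k\bu$, so the boundary component of $e_{\bu_h}$ vanishes there. Hence we may set $\bv=e_{\bu_h}$ and $q=e_{p_h}$ in \eqref{erroreqn}, which gives
\[
\3bar (e_{\bu_h},e_{p_h})\3bar^2 = -s_1(Q_h^1\bu,e_{\bu_h}) - s_2(Q_h^2p,e_{p_h}).
\]
By Cauchy--Schwarz for the positive semidefinite forms $s_1$ and $s_2$, the right-hand side is bounded by $\big(s_1(Q_h^1\bu,Q_h^1\bu)^{1/2}+s_2(Q_h^2p,Q_h^2p)^{1/2}\big)\3bar (e_{\bu_h},e_{p_h})\3bar$. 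It therefore suffices to show that
\[
s_1(Q_h^1\bu,Q_h^1\bu)+s_2(Q_h^2p,Q_h^2p)\le Ch^{2k}\big(\|\bu\|_{k+1}^2+\|p\|_k^2\big).
\]

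For the first term of $s_1$, note that $Q_b^k\bu$ is the $L^2(e)$ projection. Since $Q_0^k\bu|_e\in[P_k(e)]^d$, we get $\|Q_0^k\bu-Q_b^k\bu\|_e=\|Q_b^k(Q_0^k\bu-\bu)\|_e\le\|Q_0^k\bu-\bu\|_e$. The trace inequality \eqref{trace_H1} and \eqref{error_uh} with $s=0,1$ then give
\[
\sum_T h_T^{-1}\|Q_0^k\bu-\bu\|_{\partial T}^2\le C\sum_T\big(h_T^{-2}\|\bu-Q_0^k\bu\|_T^2+\|\bu-Q_0^k\bu\|_{1,T}^2\big)\le Ch^{2k}\|\bu\|_{k+1}^2.
\]
The gradient term is handled the same way. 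Because $\nabla Q_0^k\bu|_e\in[P_{k-1}(e)]^{d\times d}$, we have $\|\nabla Q_0^k\bu-Q_b^{k-1}\nabla\bu\|_e\le\|\nabla(Q_0^k\bu-\bu)\|_e$. Applying \eqref{trace_H1} to $\nabla(\bu-Q_0^k\bu)$ and using \eqref{error_uh} with $s=1,2$ yields
\[
\sum_T h_T\|\nabla(\bu-Q_0^k\bu)\|_{\partial T}^2\le C\sum_T\big(\|\bu-Q_0^k\bu\|_{1,T}^2+h_T^2\|\bu-Q_0^k\bu\|_{2,T}^2\big)\le Ch^{2k}\|\bu\|_{k+1}^2.
\]

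For $s_2$, the same projection argument gives $\|Q_0^{k-1}p-Q_b^{k-1}p\|_e\le\|Q_0^{k-1}p-p\|_e$. By \eqref{trace_H1} and \eqref{error_ph},
\[
\sum_T h_T\|p-Q_0^{k-1}p\|_{\partial T}^2\le C\sum_T\big(\|p-Q_0^{k-1}p\|_T^2+h_T^2\|p-Q_0^{k-1}p\|_{1,T}^2\big)\le Ch^{2k}\|p\|_k^2.
\]
Combining the three bounds and dividing by $\3bar (e_{\bu_h},e_{p_h})\3bar$ (the case where it vanishes is trivial) gives \eqref{main_estimate}.

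The main obstacle is the normalization in $e_{p_h}$. The space $\mathcal{V}_h$ requires $q_0\in L_0^2(\Omega)$, so $e_{p_h}\in\mathcal{V}_h$ only if $Q_0^{k-1}p$ has mean zero. This holds because $p\in L_0^2(\Omega)$ and elementwise $L^2$ projection onto a space containing constants preserves the mean. I would also check that the $h_T$ weights in $s_1$ match the powers coming from the trace inequality. They do: $h_T^{-1}$ is paired with an $O(h^{k+1/2})$ trace error, and $h_T$ with an $O(h^{k-1/2})$ trace error, so both contributions are $O(h^{2k})$ after squaring and summing.
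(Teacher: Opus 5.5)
Your proposal is correct and follows the paper's proof essentially step for step. You test the error equation with $(e_{\bu_h}, e_{p_h})$, apply Cauchy--Schwarz to the stabilizer terms, and bound $s_1(Q_h^1\bu,\cdot)$ and $s_2(Q_h^2 p,\cdot)$ via the trace inequality \eqref{trace_H1} and the estimates \eqref{error_uh}, \eqref{error_ph}. Along the way you make explicit several points the paper leaves implicit: that $e_{\bu_h}\in\mathcal{W}_h^0$, that $e_{p_h}\in\mathcal{V}_h$ because $Q_0^{k-1}p$ has mean zero, and the face-projection contraction $\|Q_0^k\bu-Q_b^k\bu\|_e\le\|Q_0^k\bu-\bu\|_e$.
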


\begin{proof}
By setting the test functions $\bv = e_{\bu_h}$ and $q = e_{p_h}$ in the error equation \eqref{erroreqn}, and recalling the definition of the energy norm $\3bar \cdot \3bar$, we have:
\begin{equation*}
\3bar (e_{\bu_h}, e_{p_h}) \3bar^2 = - s_1(Q_h^1 \bu, e_{\bu_h}) - s_2(Q_h^2 p, e_{p_h}).
\end{equation*}
 
\textbf{Bound for $s_1(Q_h^1 \bu, e_{\bu_h})$:} Applying the Cauchy-Schwarz inequality, the trace inequality \eqref{trace_H1}, and the approximation estimate \eqref{error_uh}, we obtain:
\begin{align}\label{s1} \ad{ &\quad \ 
s_1(Q_h^1 \bu, e_{\bu_h}) \\
 &\leq \left( \sum_{T \in \mathcal{T}_h} h_T^{-1} \|Q_0^k \bu - Q_b^k \bu\|_{\partial T}^2 + h_T \|\nabla Q_0^{k}\bu - Q_b^{k-1}(\nabla \bu)\|_{\partial T}^2 \right)^{\frac{1}{2}} \\
   &\qquad \3bar (e_{\bu_h}, e_{p_h})\3bar 
  \\ 
&\le C \left( \sum_{T \in \mathcal{T}_h} h_T^{-2} \|Q_0^k \bu - \bu\|_{T}^2 + \|Q_0^k \bu - \bu\|_{1, T}^2 \right. 
 \\
&\quad \left. + \|\nabla Q_0^{k}\bu - \nabla \bu\|_{T}^2 + h_T^2\|\nabla Q_0^{k}\bu - \nabla \bu\|_{1, T}^2 \right)^{\frac{1}{2}} \3bar (e_{\bu_h}, e_{p_h})\3bar 
 \\
&\le C h^{k} \|\bu\|_{k+1} \3bar (e_{\bu_h}, e_{p_h})\3bar. }
\end{align} 

\textbf{Bound for $s_2(Q_h^2 p, e_{p_h})$:} Similarly, applying the Cauchy-Schwarz inequality, the trace inequality \eqref{trace_H1} and the estimate \eqref{error_ph} yields:
\begin{align}\label{s2}
s_2(Q_h^2 p, e_{p_h}) &\leq \left( \sum_{T \in \mathcal{T}_h} h_T \|Q_0^{k-1} p - Q_b^{k-1} p\|_{\partial T}^2 \right)^{\frac{1}{2}} \3bar (e_{\bu_h}, e_{p_h})\3bar \nonumber \\
&\le C \left( \sum_{T \in \mathcal{T}_h} \|Q_0^{k-1} p - p\|_{T}^2 + h_T^2\|Q_0^{k-1} p - p\|_{1, T}^2 \right)^{\frac{1}{2}} \3bar (e_{\bu_h}, e_{p_h})\3bar \nonumber \\
&\leq C h^{k} \|p\|_{k} \3bar (e_{\bu_h}, e_{p_h})\3bar.
\end{align} 
Substituting \eqref{s1} and \eqref{s2} back into the energy norm identity and dividing by $\3bar (e_{\bu_h}, e_{p_h})\3bar$ completes the proof.
\end{proof}

\begin{theorem} \label{thm:global_error}
Let $(\bu, p) \in [H^{k+1}(\Omega)]^d \times H^{k}(\Omega)$ be the exact solutions of the Stokes problem \eqref{moment}--\eqref{bc}, and let $(\bu_h, p_h) \in \mathcal{W}_h \times \mathcal{V}_h$ be the LS-WG numerical solution defined by \eqref{lswg}. Then, there exists a constant $C > 0$ such that
\begin{equation}\label{main_estimate2}
\3bar (\bu-\bu_h, p - p_h) \3bar \le C h^{k-1} (\|\bu\|_{k+1} + \|p\|_{k}).
\end{equation}
\end{theorem}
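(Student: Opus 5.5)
The plan is a triangle inequality combined with Theorem \ref{thm:discrete_error}. We interpret $\3bar(\bu-\bu_h,p-p_h)\3bar$ by identifying the exact solution with the weak functions $\{\bu|_T,\bu|_{\partial T},\nabla\bu|_{\partial T}\}$ and $\{p|_T,p|_{\partial T}\}$. The forms $a$, $s_1$, $s_2$ are then evaluated with $\Delta_w,\nabla_w\cdot,\nabla_w$ replaced by their elementwise strong/continuous counterparts, i.e.\ using $\bu_0-\bu_h$ and its gradient on each $T$. We split
\begin{equation*}
\bu-\bu_h=(\bu-Q_h^1\bu)-e_{\bu_h},\qquad p-p_h=(p-Q_h^2p)-e_{p_h},
\end{equation*}
so that
\begin{equation*}
\3bar(\bu-\bu_h,p-p_h)\3bar\le\3bar(\bu-Q_h^1\bu,p-Q_h^2p)\3bar+\3bar(e_{\bu_h},e_{p_h})\3bar .
\end{equation*}
The second term is bounded by $Ch^k(\|\bu\|_{k+1}+\|p\|_k)$ by Theorem \ref{thm:discrete_error}, which is better than needed since $h\le C$. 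It remains to show that the projection error is $O(h^{k-1})$.

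For the projection error we estimate each piece of the energy norm separately.

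\textbf{The $a$-part.} On each $T$ it involves
\begin{equation*}
\|-\Delta\bu_0+\Delta Q_0^k\bu+\nabla p-\nabla Q_0^{k-1}p\|_T
\quad\text{and}\quad
\|\nabla\cdot(\bu-Q_0^k\bu)\|_T .
\end{equation*}
Using the elementwise interpretation, the first is bounded by $\|\bu-Q_0^k\bu\|_{2,T}+\|p-Q_0^{k-1}p\|_{1,T}$. By \eqref{error_uh} with $s=2$ and \eqref{error_ph} with $s=1$, summing gives $Ch^{k-1}(\|\bu\|_{k+1}+\|p\|_k)$. This is exactly where the rate drops from $h^k$ to $h^{k-1}$. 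The divergence term is $O(h^k)$ by \eqref{error_uh} with $s=1$.

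Alternatively, one can use the weak-operator interpretation via Lemma \ref{Lemma:commute}. Then $\Delta\bu-\Delta_wQ_h^1\bu=\Delta\bu-Q_0^{k-2}\Delta\bu$, which is bounded by \eqref{error_uh2}, and similarly \eqref{error_ph2} handles the pressure gradient. This also yields $O(h^{k-1})$, so either reading of the norm gives the claimed rate.

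\textbf{The stabilizers.} For the exact solution, traces coincide, so the boundary components of $\bu-Q_h^1\bu$ are $\bu-Q_b^k\bu$ and $\nabla\bu-Q_b^{k-1}\nabla\bu$. The stabilizer differences are therefore $(\bu-Q_0^k\bu)-(\bu-Q_b^k\bu)=Q_b^k\bu-Q_0^k\bu$, and analogously for the gradient and the pressure. These are precisely the quantities already bounded in \eqref{s1} and \eqref{s2}, using the trace inequality \eqref{trace_H1} together with \eqref{error_uh} and \eqref{error_ph}. They give $O(h^k)$.

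Collecting the bounds and using $h^k\le Ch^{k-1}$ yields \eqref{main_estimate2}.

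The main obstacle is definitional rather than technical. The norm $\3bar\cdot\3bar$ was defined only on the discrete space, so we must fix a consistent meaning for $\Delta_w$, $\nabla_w$ and $\nabla_w\cdot$ applied to the exact solution. We adopt the convention that the weak operators reduce to strong derivatives for smooth functions whose boundary data are their true traces; \eqref{dislap2} shows this up to projection onto $P_{k-2}$. With that convention fixed, every term reduces to the standard approximation estimates of Lemma \ref{lem:approx}. The only genuinely lossy term is the $H^2$-seminorm error of $Q_0^k\bu$ (respectively the $Q_0^{k-2}$ projection of $\Delta\bu$), and it dictates the $h^{k-1}$ rate.
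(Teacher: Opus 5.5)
Your proof is correct and is essentially the paper's argument, provided you commit to the weak-operator reading you list as the alternative (with $\Delta_w\bu:=\Delta\bu$ and $\nabla_w p:=\nabla p$ for the exact solution). The steps then match the paper's: the triangle inequality, Theorem \ref{thm:discrete_error} for $(e_{\bu_h},e_{p_h})$, Lemma \ref{Lemma:commute} with \eqref{error_uh2} and \eqref{error_ph2} for the $a$-part, and the stabilizer terms of the projection error reducing to $s_1(Q_h^1\bu,Q_h^1\bu)+s_2(Q_h^2p,Q_h^2p)$, which are already bounded in \eqref{s1}--\eqref{s2}. Your unsquared triangle inequality is in fact cleaner than the paper's squared version, which drops a factor $2$.

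You should discard the ``elementwise strong'' reading, however. $\Delta_w(Q_h^1\bu)=Q_0^{k-2}(\Delta\bu)$ is in general not equal to $\Delta Q_0^k\bu$. Moreover, if the whole difference were measured with elementwise strong derivatives, the second term of your triangle inequality would no longer be the quantity that Theorem \ref{thm:discrete_error} bounds, so your argument does not justify the claim that either reading gives the rate.
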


\begin{proof} 
By the triangle inequality and the definition of the error functions, we have:
\begin{align*} &\quad \
\3bar (\bu-\bu_h, p- p_h) \3bar^2\\
  &\leq \3bar (\bu-Q_h^1\bu, p- Q_h^2p ) \3bar^2 + \3bar ( Q_h^1\bu-\bu_h, Q_h^2p-p_h ) \3bar^2 \\
&= \3bar (\bu-Q_h^1\bu, p- Q_h^2p ) \3bar^2 + \3bar (e_{\bu_h}, e_{p_h}) \3bar^2.
\end{align*}
Expanding the first term using the definition of the energy norm, and applying the commutative properties \eqref{eq:commute1}--\eqref{eq:commute3}, we obtain:
\begin{align*} &\quad \
\3bar (\bu-Q_h^1\bu, p- Q_h^2p ) \3bar^2 \\
  &= a((\bu-Q_h^1\bu, p- Q_h^2p ), (\bu-Q_h^1\bu, p- Q_h^2p )) \\
&\quad + s_1(\bu-Q_h^1\bu, \bu-Q_h^1\bu) + s_2 (p- Q_h^2p, p- Q_h^2p ) \\
&= \sum_{T\in \mathcal{T}_h} \|-\Delta \bu + Q_0^{k-2}(\Delta \bu) + \nabla p - Q_0^{k-2}(\nabla p) \|_T^2 \\
&\quad + \sum_{T\in \mathcal{T}_h} \|\nabla \cdot \bu - Q_0^{k-1}(\nabla\cdot \bu)\|_T^2 \\
&\quad + s_1(Q_h^1\bu, Q_h^1\bu) + s_2 (Q_h^2p, Q_h^2p).
\end{align*}
Note that $s_1(\bu, \cdot) = 0$ and $s_2(p, \cdot) = 0$ since the continuous solutions have no jumps across element boundaries. 
Using the approximation estimates \eqref{error_uh2} and \eqref{error_ph2}, along with the bounds established for the stabilizers in \eqref{s1} and \eqref{s2}, and the discrete error bound \eqref{main_estimate}, we have:
\begin{align*} &\quad \ 
\3bar (\bu-\bu_h, p- p_h) \3bar^2 \\
 &\leq C \sum_{T\in \mathcal{T}_h} \left( \|\Delta \bu - Q_0^{k-2}(\Delta \bu)\|_T^2 + \|\nabla p - Q_0^{k-2}(\nabla p) \|_T^2 \right) \\
&\quad + C \sum_{T\in \mathcal{T}_h} \|\nabla \cdot \bu - Q_0^{k-1}(\nabla\cdot \bu)\|_T^2 + C h^{2k} (\|\bu\|^2_{k+1} + \|p\|^2_{k}) \\ 
&\leq C h^{2k-2} \|\bu\|^2_{k+1} + C h^{2k-2} \|p\|^2_{k} + C h^{2k} (\|\bu\|^2_{k+1} + \|p\|^2_{k}) \\ 
&\leq C h^{2k-2} (\|\bu\|^2_{k+1} + \|p\|^2_{k}).
\end{align*}
Taking the square root of both sides yields the final estimate \eqref{main_estimate2}.
\end{proof}

\section{Numerical experiment}

In the first numerical test,  we solve the Stokes equations \eqref{moment}--\eqref{bc}, where 
   $\Omega=(0,1)\times(0,1)$, 
\an{\label{sol1} \ad{ \ \b u&=\p{8(1-2y)(y-y^2)(x-x^2)^2\\
        -8 (1-2x)(x-x^2)(y-y^2)^2}, \ \\\ p&=4 (1-2x)(x-x^2)(1-2y)(y-y^2) .  }  }
The computation is done on the triangular grids shown in Figure \ref{f-g1} and on the
   non-convex pentagonal  
  grids shown in Figure \ref{f-g2}, by 
  the weak Galerkin $P_k$-$P_{k-1}^2$ finite elements defined in
   \eqref{W-h} and \eqref{V-h}, $k=2,3$ and $4$.
The results are listed in Tables \ref{t1}-\ref{t3}, 
    where we can see that the optimal orders of convergence 
  are achieved roughly.  
  In these tables, $G_i$ denotes the $i$-th grid, shown in
    Figure \ref{f-g1} or Figure \ref{f-g2} .

\begin{figure}[H]
 \begin{center}\setlength\unitlength{1.0pt}
\begin{picture}(340,110)(0,0) 
 \put(0,102){$G_1:$}  \put(115,102){$G_2:$} \put(230,102){$G_3:$} 
  
  \put(0,-10){\includegraphics[width=330pt]{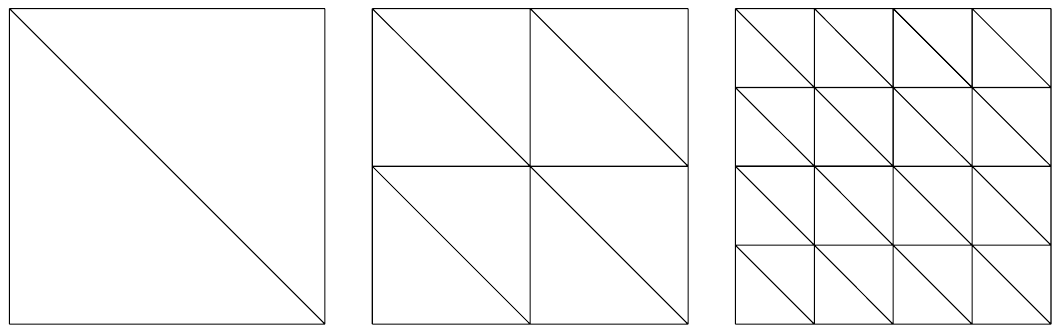}}   
 \end{picture}\end{center}
\caption{The triangular grids for \eqref{sol1}, used in Tables \ref{t1}--\ref{t3}. }\label{f-g1}
\end{figure}

\begin{table}[H]
  \centering  \renewcommand{\arraystretch}{1.1}
  \caption{Error profile by the $P_2$-$P_1$ WG element  for computing \eqref{sol1}. }
  \label{t1}
\begin{tabular}{c|cc|cc|cc}
\hline
  $G_i$ & \quad $\| u-u_h\|_{0}$ & $O(h^r)$ & \  $\|\Delta_w( u-u_h)\|_0 $& $O(h^r)$ 
    &$ \| p-p_h\|_{0}$ & $O(h^r)$  \\ \hline
    &  \multicolumn{6}{c}{On triangular grids (Figure \ref{f-g1}) }   \\
\hline  
 1&    0.603E-02 & --- &    0.463E-01 & --- &    0.236E-01 & ---  \\
 2&    0.308E-02 &  1.0&    0.170E-01 &  1.4&    0.147E-01 &  0.7 \\
 3&    0.814E-03 &  1.9&    0.889E-01 &  0.0&    0.910E-02 &  0.7 \\
 4&    0.193E-03 &  2.1&    0.486E-01 &  0.9&    0.258E-02 &  1.8 \\
 5&    0.490E-04 &  2.0&    0.265E-01 &  0.9&    0.653E-03 &  2.0 \\
 6&    0.127E-04 &  2.0&    0.137E-01 &  1.0&    0.163E-03 &  2.0 \\
\hline 
    &  \multicolumn{6}{c}{On pentagonal grids (Figure \ref{f-g2}) }   \\
\hline  
 1&    0.256E-01 & --- &    0.259E+00 & --- &    0.598E-01 & ---  \\
 2&    0.663E-02 &  1.9&    0.576E-01 &  2.2&    0.301E-01 &  1.0 \\
 3&    0.115E-02 &  2.5&    0.791E-01 &  0.0&    0.108E-01 &  1.5 \\
 4&    0.217E-03 &  2.4&    0.401E-01 &  1.0&    0.334E-02 &  1.7 \\
 5&    0.529E-04 &  2.0&    0.227E-01 &  0.8&    0.884E-03 &  1.9 \\
 6&    0.143E-04 &  1.9&    0.123E-01 &  0.9&    0.224E-03 &  2.0 \\
\hline 
    \end{tabular}%
\end{table}%

\begin{figure}[H]
 \begin{center}\setlength\unitlength{1.0pt}
\begin{picture}(340,110)(0,0) 
 \put(0,102){$G_1:$}  \put(115,102){$G_2:$} \put(230,102){$G_3:$} 
  
  \put(0,-10){\includegraphics[width=330pt]{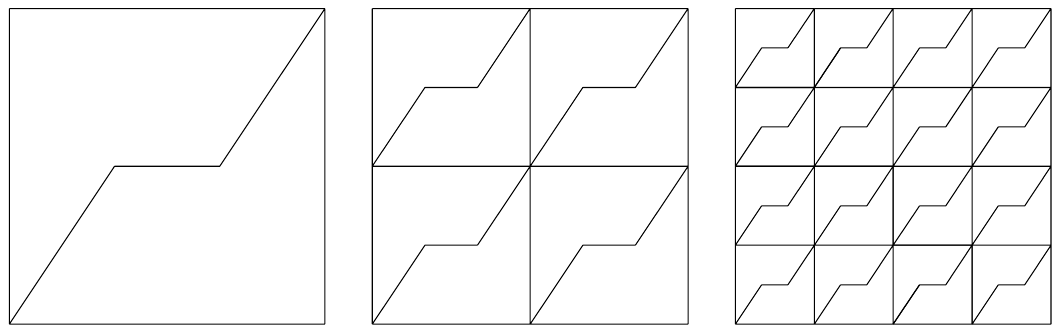}}   
 \end{picture}\end{center}
\caption{The pentagonal grids for \eqref{sol1}, used in Tables \ref{t1}--\ref{t3}. }\label{f-g2}
\end{figure}

\begin{table}[H]
  \centering  \renewcommand{\arraystretch}{1.1}
  \caption{Error profile by the $P_3$-$P_2$ WG element  for computing \eqref{sol1}. }
  \label{t2}
\begin{tabular}{c|cc|cc|cc}
\hline
  $G_i$ & \quad $\| u-u_h\|_{0}$ & $O(h^r)$ & \  $\|\Delta_w( u-u_h)\|_0 $& $O(h^r)$ 
    &$ \| p-p_h\|_{0}$ & $O(h^r)$  \\ \hline
    &  \multicolumn{6}{c}{On triangular grids (Figure \ref{f-g1}) }   \\
\hline  

 1&    0.472E-02 & --- &    0.942E-02 & --- &    0.297E-01 & ---  \\
 2&    0.602E-03 &  3.0&    0.142E+00 &  0.0&    0.109E-01 &  1.4 \\
 3&    0.775E-04 &  3.0&    0.460E-01 &  1.6&    0.177E-02 &  2.6 \\
 4&    0.522E-05 &  3.9&    0.115E-01 &  2.0&    0.216E-03 &  3.0 \\
 5&    0.296E-06 &  4.1&    0.259E-02 &  2.1&    0.256E-04 &  3.1 \\\hline 
    &  \multicolumn{6}{c}{On pentagonal grids (Figure \ref{f-g2}) }   \\
\hline  
 1&    0.246E-01 & --- &    0.866E+00 & --- &    0.179E+00 & ---  \\
 2&    0.229E-02 &  3.4&    0.161E+00 &  2.4&    0.210E-01 &  3.1 \\
 3&    0.233E-03 &  3.3&    0.399E-01 &  2.0&    0.322E-02 &  2.7 \\
 4&    0.162E-04 &  3.8&    0.961E-02 &  2.1&    0.409E-03 &  3.0 \\
 5&    0.105E-05 &  4.0&    0.222E-02 &  2.1&    0.532E-04 &  2.9 \\
\hline 
    \end{tabular}%
\end{table}%

\begin{table}[H]
  \centering  \renewcommand{\arraystretch}{1.1}
  \caption{Error profile by the $P_4$-$P_3$ WG element  for computing \eqref{sol1}. }
  \label{t3}
\begin{tabular}{c|cc|cc|cc}
\hline
  $G_i$ & \quad $\| u-u_h\|_{0}$ & $O(h^r)$ & \  $\|\Delta_w( u-u_h)\|_0 $& $O(h^r)$ 
    &$ \| p-p_h\|_{0}$ & $O(h^r)$  \\ \hline
    &  \multicolumn{6}{c}{On triangular grids (Figure \ref{f-g1}) }   \\
\hline   
 1&    0.312E-02 & --- &    0.343E+00 & --- &    0.327E-01 & ---  \\
 2&    0.265E-03 &  3.6&    0.145E+00 &  1.2&    0.601E-02 &  2.4 \\
 3&    0.116E-04 &  4.5&    0.138E-01 &  3.4&    0.320E-03 &  4.2 \\
 4&    0.363E-06 &  5.0&    0.142E-02 &  3.3&    0.183E-04 &  4.1 \\
 \hline 
    &  \multicolumn{6}{c}{On pentagonal grids (Figure \ref{f-g2}) }   \\
\hline  
 1&    0.298E-01 & --- &    0.140E+01 & --- &    0.214E+00 & ---  \\
 2&    0.186E-02 &  4.0&    0.183E+00 &  2.9&    0.142E-01 &  3.9 \\
 3&    0.741E-04 &  4.6&    0.252E-01 &  2.9&    0.110E-02 &  3.7 \\
 4&    0.250E-05 &  4.9&    0.320E-02 &  3.0&    0.715E-04 &  3.9 \\
\hline 
    \end{tabular}%
\end{table}%

For a 3D numerical test,  we solve the Stokes equations \eqref{moment}--\eqref{bc}, where 
   $\Omega=(0,1)^3$, 
\an{\label{sol2} \ad{ \ \b u&=\p{- 2^{10}(x-x^2)^2  (y-y^2)^2 (1-2z)(z-z^2)\\ 
    \ 2^{10}(x-x^2)^2  (y-y^2)^2 (1-2z)(z-z^2)\\  
        2^{10} (2 x^3 - 2 y^3 - 3 x^2 + 3 y^2 + x - y) (z-z^2)^2}, \\
         p&=z^3 -\frac 14 .  }  }
We also solve the problem with a smoother solution:
\an{\label{s3} \ad{ \ \b u&=\p{e^y \\ 
     e ^ z\\   e^x }, &
         p&=y -\frac 12 .  }  }

\begin{figure}[H]
 \begin{center}\setlength\unitlength{1.0pt}
\begin{picture}(340,92)(0,0) 
 \put(0,85){$G_1:$}  \put(115,85){$G_2:$} \put(230,85){$G_3:$} 
  
  \put(0,-10){\includegraphics[width=330pt]{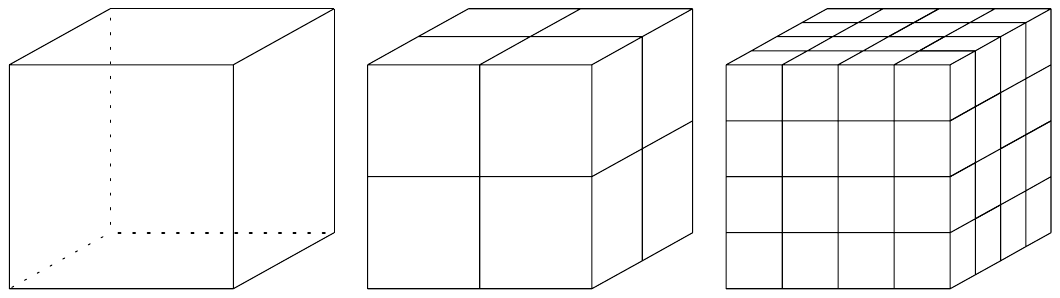}}   
 \end{picture}\end{center}
\caption{The cuboid grids for solving \eqref{sol2} and \eqref{s3}. }\label{f-g3}
\end{figure}

\begin{figure}[H]
 \begin{center}\setlength\unitlength{1.0pt}
\begin{picture}(340,92)(0,0) 
 \put(0,85){$G_1:$}  \put(115,85){$G_2:$} \put(230,85){$G_3:$} 
  
  \put(0,-10){\includegraphics[width=330pt]{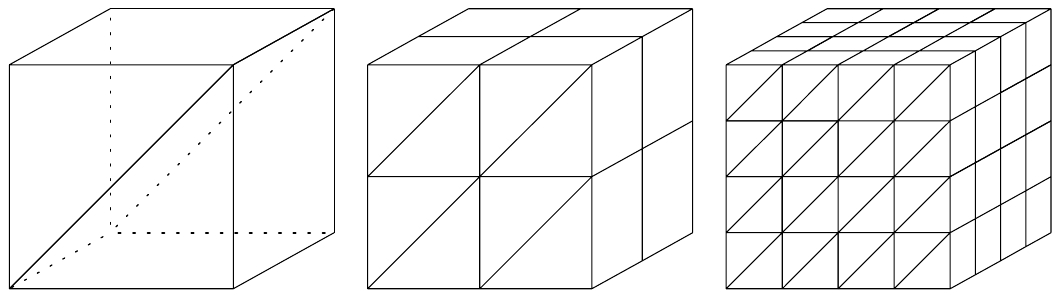}}   
 \end{picture}\end{center}
\caption{The wedge-shape grids for solving \eqref{sol2} and \eqref{s3}. }\label{f-g4}
\end{figure}

\begin{figure}[H]
 \begin{center}\setlength\unitlength{1.0pt}
\begin{picture}(340,92)(0,0) 
 \put(0,85){$G_1:$}  \put(115,85){$G_2:$} \put(230,85){$G_3:$} 
  
  \put(0,-10){\includegraphics[width=330pt]{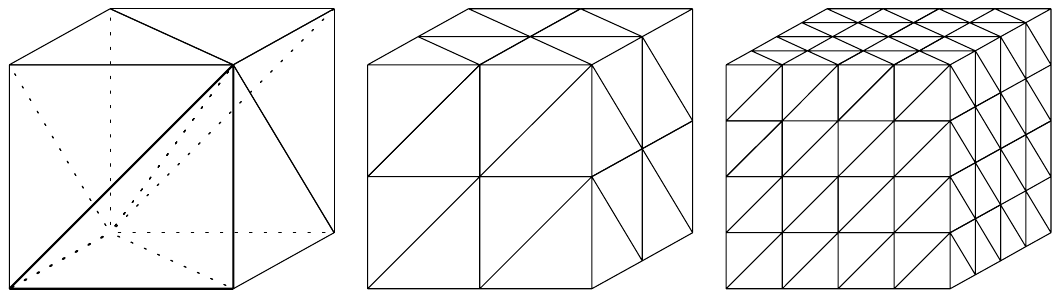}}   
 \end{picture}\end{center}
\caption{The tetrahedral grids for solving \eqref{sol2} and \eqref{s3}. }\label{f-g5}
\end{figure}

The computation is done on the cuboid grids shown in Figure \ref{f-g3},
  on the wedge-shape grids  shown in Figure \ref{f-g4}, and on the
  tetrahedral grids shown in Figure \ref{f-g5}, by 
  the weak Galerkin $P_k$-$P_{k-1}^2$ finite elements defined in
   \eqref{W-h} and \eqref{V-h}, $k=2,3$ and $4$.
The results are listed in Tables \ref{t4}-\ref{t6}, 
    where we can see barely that the optimal orders of convergence 
  are achieved,
 due to large condition numbers and the computer accuracy.

\begin{table}[H]
  \centering  \renewcommand{\arraystretch}{1.1}
  \caption{Error profile by the 3D $P_2$-$P_1$ WG finite element. }
  \label{t4}
\begin{tabular}{c|cc|cc|cc}
\hline
  $G_i$ & \quad $\| u-u_h\|_{0}$ & $O(h^r)$ & \  $\|\Delta_w( u-u_h)\|_0 $& $O(h^r)$ 
    &$ \| p-p_h\|_{0}$ & $O(h^r)$  \\ \hline
    &  \multicolumn{6}{c}{On cuboid grids (Figure \ref{f-g3}) for \eqref{s3} }   \\
\hline   
 1 &   0.195E-01 &0.00 &   0.147E+00 &0.00 &   0.733E-01 &0.00 \\
 2 &   0.284E-02 &2.78 &   0.679E-01 &1.11 &   0.242E-01 &1.60 \\
 3 &   0.395E-03 &2.85 &   0.171E-01 &1.99 &   0.523E-02 &2.21 \\
 4 &   0.521E-04 &2.92 &   0.416E-02 &2.04 &   0.121E-02 &2.11 \\
 5 &   0.684E-05 &2.93 &   0.101E-02 &2.04 &   0.291E-03 &2.06 \\
 \hline 
    &  \multicolumn{6}{c}{On wedge grids (Figure \ref{f-g4}) for \eqref{s3} }   \\
\hline  
 1 &   0.120E-01 &0.00 &   0.237E+00 &0.00 &   0.507E-01 &0.00 \\
 2 &   0.191E-02 &2.65 &   0.115E+00 &1.05 &   0.228E-01 &1.15 \\
 3 &   0.269E-03 &2.83 &   0.285E-01 &2.00 &   0.469E-02 &2.28 \\
 4 &   0.365E-04 &2.88 &   0.101E-01 &1.50 &   0.104E-02 &2.17 \\ 
 \hline 
    &  \multicolumn{6}{c}{On tetrahedral grids (Figure \ref{f-g5}) for \eqref{sol2} }   \\
\hline   
 1 &   0.116E+00 &0.00 &   0.243E+01 &0.00 &   0.726E+00 &0.00 \\
 2 &   0.836E-01 &0.47 &   0.255E+01 &0.00 &   0.492E+00 &0.56 \\
 3 &   0.239E-01 &1.81 &   0.207E+01 &0.30 &   0.179E+00 &1.46 \\
 4 &   0.420E-02 &2.51 &   0.943E+00 &1.14 &   0.508E-01 &1.82 \\
\hline   
    &  \multicolumn{6}{c}{On cuboid grids (Figure \ref{f-g3}) for \eqref{sol2} }   \\
\hline   
 1 &   0.690E+00 &0.00 &   0.592E-11 &0.00 &   0.182E+02 &0.00 \\
 2 &   0.337E+00 &1.03 &   0.175E+02 &0.00 &   0.738E+00 &4.62 \\
 3 &   0.665E-01 &2.34 &   0.278E+01 &2.65 &   0.185E+00 &1.99 \\
 4 &   0.936E-02 &2.83 &   0.506E+00 &2.46 &   0.495E-01 &1.90 \\
 5 &   0.186E-02 &2.33 &   0.222E+00 &1.19 &   0.159E-01 &1.63 \\
\hline   
    &  \multicolumn{6}{c}{On wedge grids (Figure \ref{f-g4}) for \eqref{sol2} }   \\
\hline   
 1 &   0.538E+00 &0.00 &   0.241E+02 &0.00 &   0.201E+01 &0.00 \\
 2 &   0.196E+00 &1.45 &   0.865E+01 &1.48 &   0.575E+00 &1.81 \\
 3 &   0.431E-01 &2.19 &   0.300E+01 &1.53 &   0.186E+00 &1.63 \\
 4 &   0.627E-02 &2.78 &   0.133E+01 &1.18 &   0.459E-01 &2.02 \\
\hline   
    \end{tabular}%
\end{table}%

\begin{table}[H]
  \centering  \renewcommand{\arraystretch}{1.1}
  \caption{Error profile by the 3D $P_3$-$P_2$ WG finite element. }
  \label{t5}
\begin{tabular}{c|cc|cc|cc}
\hline
  $G_i$ & \quad $\| u-u_h\|_{0}$ & $O(h^r)$ & \  $\|\Delta_w( u-u_h)\|_0 $& $O(h^r)$ 
    &$ \| p-p_h\|_{0}$ & $O(h^r)$  \\ \hline
    &  \multicolumn{6}{c}{On cuboid grids (Figure \ref{f-g3}) for \eqref{s3} }   \\
\hline   
 1 &   0.159E-02 &0.00 &   0.640E-01 &0.00 &   0.285E-01 &0.00 \\
 2 &   0.195E-03 &3.03 &   0.729E-02 &3.13 &   0.182E-02 &3.97 \\
 3 &   0.138E-04 &3.82 &   0.106E-02 &2.78 &   0.140E-03 &3.70 \\
 4 &   0.903E-06 &3.94 &   0.140E-03 &2.93 &   0.993E-05 &3.82 \\
 \hline 
    &  \multicolumn{6}{c}{On wedge grids (Figure \ref{f-g4}) for \eqref{s3} }   \\
\hline  
 1 &   0.106E-02 &0.00 &   0.661E-01 &0.00 &   0.178E-01 &0.00 \\
 2 &   0.147E-03 &2.85 &   0.110E-01 &2.59 &   0.161E-02 &3.47 \\
 3 &   0.102E-04 &3.85 &   0.196E-02 &2.49 &   0.119E-03 &3.76 \\
 \hline 
    &  \multicolumn{6}{c}{On cuboid grids (Figure \ref{f-g3}) for \eqref{sol2} }   \\
\hline   
 1 &   0.103E+01 &0.00 &   0.950E+02 &0.00 &   0.808E+01 &0.00 \\
 2 &   0.254E+00 &2.01 &   0.167E+02 &2.50 &   0.156E+01 &2.37 \\
 3 &   0.291E-01 &3.13 &   0.204E+01 &3.04 &   0.113E+00 &3.78 \\
 4 &   0.383E-02 &2.93 &   0.411E+00 &2.31 &   0.941E-02 &3.59 \\
\hline    
    &  \multicolumn{6}{c}{On wedge grids (Figure \ref{f-g4}) for \eqref{sol2} }   \\
\hline   
 1 &   0.578E+00 &0.00 &   0.546E+02 &0.00 &   0.415E+01 &0.00 \\
 2 &   0.117E+00 &2.31 &   0.108E+02 &2.33 &   0.372E+00 &3.48 \\
 3 &   0.100E-01 &3.55 &   0.352E+01 &1.62 &   0.434E-01 &3.10 \\
\hline   
    \end{tabular}%
\end{table}%

\begin{table}[H]
  \centering  \renewcommand{\arraystretch}{1.1}
  \caption{Error profile by the 3D $P_4$-$P_3$ WG finite element. }
  \label{t6}
\begin{tabular}{c|cc|cc|cc}
\hline
  $G_i$ & \quad $\| u-u_h\|_{0}$ & $O(h^r)$ & \  $\|\Delta_w( u-u_h)\|_0 $& $O(h^r)$ 
    &$ \| p-p_h\|_{0}$ & $O(h^r)$  \\ \hline 
    &  \multicolumn{6}{c}{On cuboid grids (Figure \ref{f-g3}) for \eqref{sol2} }   \\
\hline   
 1 &   0.480E+01 &0.00 &   0.421E+03 &0.00 &   0.686E+02 &0.00 \\
 2 &   0.393E+00 &3.61 &   0.462E+02 &3.19 &   0.126E+01 &5.76 \\
 3 &   0.162E-01 &4.60 &   0.542E+01 &3.09 &   0.600E-01 &4.40 \\
\hline    
    &  \multicolumn{6}{c}{On wedge grids (Figure \ref{f-g4}) for \eqref{sol2} }   \\
\hline   
 1 &   0.719E+00 &0.00 &   0.106E+03 &0.00 &   0.739E+01 &0.00 \\
 2 &   0.961E-01 &2.90 &   0.569E+02 &0.90 &   0.212E+00 &5.12 \\
 3 &   0.492E-02 &4.29 &   0.807E+01 &2.82 &   0.113E-01 &4.23 \\
\hline   
    &  \multicolumn{6}{c}{On tetrahedral grids (Figure \ref{f-g5}) for \eqref{sol2} }   \\
\hline   
 1 &   0.881E-01 &0.00 &   0.744E+01 &0.00 &   0.508E+00 &0.00 \\
 2 &   0.919E-02 &3.26 &   0.302E+01 &1.30 &   0.122E+00 &2.06 \\
 3 &   0.349E-02 &1.40 &   0.432E+00 &2.81 &   0.978E-02 &3.64 \\
\hline   
    \end{tabular}%
\end{table}%

\end{document}